\newcolumntype{C}{>{$}c<{$}}
\theoremstyle{plain}
\newtheorem{theorem}{Theorem}[section]
\newtheorem*{theorem*}{The Contraction Principle}
\newtheorem{corollary}[theorem]{Corollary}
\newtheorem{proposition}[theorem]{Proposition}
\theoremstyle{definition}
\newtheorem*{definition*}{Definition}
\theoremstyle{remark}
\newtheorem{remark}[theorem]{Remark}
\newtheorem*{notation*}{Notation}
\let\Lpolish\L
\def\L{\mathcal{L}}
\newcommand{\N}{\mathbb{N}}
\newcommand{\Rn}{\mathbb{R}}
\newcommand{\ignore}[1]{}
\newcommand{\st}{\mid}
\providecommand\qc{compact}%
\providecommand\qcop{\mathop {\mathring{{\mathcal{K}}}}\nolimits}%
\providecommand\Dm{{\mathfrak{m}}}
\providecommand\Dp{{\mathfrak{p}}}
\providecommand\lra{\longrightarrow}
\providecommand\inv{{\mathrm{inv}}}
\providecommand\Spec{{\mathrm{Spec\,}}}
\providecommand\CO{{\mathcal{O}}}
\providecommand\cp[1]{{\mathrm{cp}(#1)}}
\newcommand\jsub{join-subfit}\newcommand\msub{meet-subfit}
\title{Subfitness in distributive (semi)lattices}
\author{G.~Bezhanishvili}\address{guram@nmsu.edu, New Mexico State University, NM, USA}
\author{J.~Madden}\address{jamesjmadden@gmail.com, Louisiana State University Baton Rouge, LA, USA}
\author{M.~A.~Moshier}\address{moshier@chapman.edu, Chapman University, CA, USA}
\author{M.~Tressl}\address{marcus.tressl@manchester.ac.uk, The University of Manchester, U.K.}
\author{J.~Walters-Wayland}\address{walterswayland@chapman.edu, Chapman University, CA, USA}
\date{}
\begin{document}

\subjclass[2020]{06A12; 06D22; 18F70; 54B35}
\keywords{Subfitness; semilattice; distributive lattice; frame; distributive envelope}

\begin{abstract}
    We investigate
    whether the set of subfit elements of a distributive semilattice is an ideal. This question was raised by the second author at the BLAST conference in 2022.
    We show that in general it has a negative solution, however if the semilattice is a lattice, then the solution is positive.
    This is somewhat unexpected since, as we show, a semilattice is subfit if and only if so is its distributive lattice envelope.
\end{abstract}

\maketitle

\tableofcontents

\providecommand{\up}{\mathord{\uparrow}}


\section{Introduction}

\noindent
The property of subfitness was first studied by Wallman in \cite{wallman_lattices_1938} under the name ``disjunction property.''  According to the definition in his paper, a meet-semilattice $L$  with least element zero has this property if: ``[whenever] $a$ and $b$ are different  elements of $L$ there is an element $c$ of $L$ such that one of $a\wedge c$ and $b\wedge c$ is zero and the other is not zero'' (page 115).
Evidently, the property can be defined without reference to the order.  Let $(G, \cdot)$ be a commutative semigroup  with \textbf{absorbing element} $z$ (i.e., $x\cdot z = z$ for all $x\in G$, see \cite[1.6]{golan_semirings_1999}).
We say $G$ is \textbf{subfit} if, whenever $a$ and $b$ are different  elements of $G$, there is an element $c\in G$ such that one of $a\cdot c$ and $b\cdot c$ is equal to $z$ and the other is not; in other words,
\[
\{\,c\in G\mid a\cdot c =z\,\}=\{\,c\in G\mid b\cdot c =z\,\}\implies a=b.
\]
Based on this interpretation, the property was studied in a purely algebraic setting by Pierce in \cite{pierce_homomorphisms_1954}.
Later, in the 1970s, subfitness was recognized as an important separation property in pointfree topology; see \cite{isbell_atomless_1972, simmons_lattice_1978, picado_separation_2021}.  A frame is subfit if its underlying \textit{join}-semilattice has this property, i.e., if for any different elements $a,b$ there is some $c$ such that one of $a\vee c$ and $b\vee c$ is equal to $1$ and the other is not.

In the present paper, we solve a problem stated by the second coauthor in 2022:  ``In a join-semilattice $A$, if the principal ideals ${\downarrow} a$ and ${\downarrow} b$ are subfit, does it follow that ${\downarrow}(a\vee b)$ is subfit?''  (Note that $x\in A$ is an absorbing element in  ${\downarrow} x\subseteq A$.)  We show that this is true if $A$ is the underlying join-semilattice of a distributive lattice.  It is not enough, however, to assume merely that $A$ is a distributive join-semilattice, and we provide an example.  We shall give more details about the problem shortly, but before we do, we need to clear up some points of terminology suggested by the first paragraph.

It is reasonable to view subfitness as a property of semilattices, where we understand a semilattice to be a commutative, idempotent monoid $(A,\cdot,e)$.  In practice, semilattices are often treated as posets, but there are two ways to do this.  The operation $\cdot$  may be interpreted as join $\vee$, with the identity being the bottom element and the absorbing element, if there is one, the top.
Or the operation $\cdot$ may be interpreted as meet $\wedge$, with the identity being the top element and the absorbing element, if there is one, the bottom.  Semilattices often occur within other structures.
For example, in a bounded lattice  $L=(L,\vee, \wedge, 0, 1)$, the element $0$ is the absorbing element for $\wedge$ and the identity for $\vee$, while $1$ is  the absorbing element for $\vee$ and the identity for $\wedge$.  It is possible for $L$ to be subfit with respect to either operation, or both, or neither.
For this reason, it is useful to introduce the terms ``join-subfit'' and ``meet-subfit,'' and indeed this terminology plays an important role in the  present paper.  Our main theorem is stated and proved most naturally in the language of join-semilattices, because it is inspired by topology.  On the other hand, our main example is naturally described in the language of meet-semilattices, because it is more natural to think about meet-semilattices of finite sets than about join-semilattices of co-finite sets.

Let $A=(A,\cdot, e)$ be a semilattice.
For $a\in A$, consider the set $a^\dagger:=\{\,x\in A\mid x\cdot a = a\,\}$.
Then $a^\dagger$ is a subsemilattice of $A$ with absorbing element $a$.
We say that $a$ is a \textbf{subfit element} of $A$ if $a^\dagger$ is subfit.
If $A$ is viewed as a join-semilattice, then $a^\dagger ={\downarrow} a$ and we say
$a$ is
\textbf{\jsub}; similarly if $A$ is viewed as a meet-semilattice, then $a^\dagger ={\uparrow}a$ and we say $a$ is \textbf{\msub}.

A topology, seen as a join-semilattice, is join-subfit iff every nonempty locally closed set contains a nonempty closed set
(cf.~\cite[p.~23]{picado_separation_2021}). This shows that join-subfitness of a topology is near the  T$_1$ separation property. In fact, a topology is $T_1$ iff it is $T_D$ and join-subfit; see \cite[p.~24]{picado_separation_2021}. Subfitness also has a prominent role in algebraic geometry: If $X$ is the prime spectrum of a commutative ring $R$, then its frame of radical ideals is join-subfit if and only if  $R$ is a Jacobson ring, i.e.~every radical ideal of $R$ is the intersection of maximal ideals of $R$.
Prominent examples are finitely generated algebras over a given Jacobson ring;
every homomorphism $\phi:R\lra S$ of such algebras induces a map between the maximal ideal spectra $\mathrm{Max}(S)\lra \mathrm{Max}(R),\Dm\mapsto \phi^{-1}(\Dm)$ (see \cite[Thm.~12.3.12 and Rem. 12.3.14]{dickmann_spectral_2019}). The significance of join-subfitness for ring theory in connection with Jacobson rings was pointed out by Simmons \cite{simmons_lattice_1978}  where the property was named \textit{conjunctive};
see also \cite{delzell_conjunctive_2021}.

A join-semilattice $A$ is said to be \textbf{distributive} if for all $a,b,c\in A$ with $c\leq a\lor b$, there are $a', b'\in A$ such that $a'\leq a$, $b'\leq b$, and $c = a'\lor b'$.

\begin{center}
\begin{tikzpicture}[scale=0.50]
  \node (a) at (-2,2) {$a$};
  \node (b) at (2,2) {$b$};
  \node (d) at (-2,0) {$a^\prime$};
  \node (c) at (0,1) {$c$};
  \node (e) at (2,0) {$b^\prime$};
  \node (f) at (0,3) {$a \vee b$};
  \draw (a) -- (d) -- (c)  -- (e) -- (b) ;
  \draw (c) -- (f) ;
  \draw[preaction={draw=white, -,line width=6pt}] (a) -- (f) -- (b);
\end{tikzpicture}
\end{center}
The version of the property for meet-semilattices is analogous. Note that a lattice is distributive as a join-semilattice iff  it is
distributive as a meet-semilattice iff it is a distributive lattice.\footnote{Here is the formulation of distributivity without reference to order.  We say \textit{$x$ absorbs $y$} if $x\cdot y = x$.  Then $(A,\cdot, e)$ is distributive if whenever $a\cdot b$ absorbs $c$, there are $a'$ and $b'$ that are absorbed respectively by $a$ and $b$ such that $a' \cdot b' = c$.}

At the BLAST conference at Chapman University in August 2022, coauthor Madden asked:
\textit{Do the \jsub\ elements of a distributive join-semilattice form an ideal?}  It is easy to show that in a distributive join-semilattice, if $a\leq b$ and $b$ is \jsub, then so is $a$.
So the problem is to show  that if  $a$ and $b$ are \jsub, then so is $a\vee b$; in other words, to show that the \jsub\ elements of $A$ are directed.   A simple example shows that the distributive hypothesis is necessary; in the lattice below, $a$ and $b$ are \jsub, but  $a\vee b$ is not.\looseness=-1

 \begin{center}
\begin{tikzpicture}[scale=.35]
  \node (one) at (0,6) {$a\vee b$};
  \node (s) at (0,4) {$s$};
  \node (t) at (0,2) {$t$};
  \node (zero) at (0,0) {$0$};
  \node (a) at (-3,3) {$a$};
  \node (b) at (3,3) {$b$};
  \draw (zero) -- (a) -- (one) -- (s) -- (t) -- (zero) -- (b) -- (one);
\end{tikzpicture}
\end{center}

\noindent
At the time it was asked,  the answer was known to be ``Yes'' for finite semilattices.  The question is motivated by the following considerations.  If a topological space is the union of two subspaces, both open and regular, the space may fail to be regular -- the line with a point doubled is the classical example, see \cite[Chap.~4, Ex.~5, p.~227]{munkres_topology_2000}. 
It is meaningful to ask if similar examples arise if we take subfitness in place of regularity.  The question arises in several more general contexts related to representations of algebras (see \cite{johnson_on_a_representation_1962}).  A related question was asked in \cite{delzell_conjunctive_2021}:
if $A$ is a join-semilattice such that ${\downarrow}a$ is \jsub\ for each $a\in A$, is $A$ ideally subfit?\footnote{The property of ideal subfitness was introduced in \cite{martinez_yosida_2006} and studied in \cite{delzell_conjunctive_2021}.
A join-semilattice $S$ is said to be \textit{ideally subfit} provided for all $u, v\in A$, if $u\nleq v$, then there is an ideal $W$ of $A$ such that, in the lattice of ideals of $A$, we have ${\downarrow}u\vee W=A$ and ${\downarrow}v\vee W\neq A$.}  Considering the relationship to the regularity problem, it is reasonable to ask whether the directedness condition is necessary, and this is the immediate motivation for the question.

In \cref{secLattice} we show that the question has a positive solution provided $A$ is the underlying join-semilattice of a distributive lattice,
and hence it has a positive solution in the realm of frames.
On the other hand, in \cref{DSLatCounterExample} we provide an example showing that for distributive semilattices
the answer is in general negative.
This is somewhat unexpected since we also show in \cref{secEnvelope} that an arbitrary semilattice is subfit if and only if its distributive lattice envelope is subfit.
Finally, in \cref{secTopology}, we use the representation theory for distributive (semi)lattices to provide a topological perspective on our results.

\section{Subfit elements in distributive lattices are directed} \label{secLattice}

\noindent
For a distributive join-semilattice $A$, let $S$ be the set of \jsub\ elements of $A$. It is straightforward to see that $S$ is a downset of $A$. In this paper we are concerned with the question of whether $S$ is an ideal. In this section we show that this is indeed the case provided $A$ is a lattice. Our proof below is constructive. In \cref{secTopology} we provide a topological explanation of this result, which does use the Prime Ideal Theorem.

\begin{theorem}\label{SubfitIsJoinStableInDLat}
    If $A$ is a bounded distributive lattice, then the set $S$ of \jsub\  elements of $A$ is an ideal of $A$. 
\end{theorem}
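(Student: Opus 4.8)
The plan is to derive that $S$ is an ideal from three facts: $S$ is a downset (noted in the excerpt), $0\in S$ vacuously (since ${\downarrow}0=\{0\}$), and $S$ is directed --- the last being the substantive point, i.e.\ given \jsub\ elements $a,b$ I must show that $d:=a\vee b$ is \jsub. I would phrase subfitness in the form most convenient here: a join-semilattice $M$ with top element $1$ is subfit iff for all $x,y\in M$ with $x\not\le y$ there is $c\in M$ with $x\vee c=1$ and $y\vee c\neq1$. (The nontrivial implication from the ``exactly one'' definition follows by applying the latter to the pair $\{x,y\}$ when $y<x$, and to the pair $\{x\vee y,\,y\}$ and then replacing the resulting separator $c$ by $c\vee y$ when $x,y$ are incomparable.) Applying this to $M={\downarrow}a$ and to $M={\downarrow}b$ is exactly the form in which I would use the hypotheses on $a$ and $b$.

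So fix $u,v\le d$ with $u\not\le v$ and put $g:=a\wedge b$; I seek $c\le d$ with $u\vee c=d$ and $v\vee c\neq d$. Distributivity gives $u=(u\wedge a)\vee(u\wedge b)$ and $v=(v\wedge a)\vee(v\wedge b)$, and I would repeatedly use two trivialities: if $(v\vee c)\wedge a\neq a$ then $v\vee c\neq d$; and if $e\le d$ satisfies $e\ge a$ and $e\ge b$ then $e=d$. The argument splits according to whether $u\le v\vee g$.

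Case $u\not\le v\vee g$. One of $u\wedge a$, $u\wedge b$ fails to lie below $v\vee g$; say $u\wedge a\not\le v\vee g$ (the other case symmetric). As $g\le a$, meeting with $a$ gives $u\wedge a\not\le(v\wedge a)\vee g$ in ${\downarrow}a$, so subfitness of $a$ yields $c_1\le a$ with $(u\wedge a)\vee c_1=a$ and $(v\wedge a)\vee g\vee c_1\neq a$. I claim $c:=c_1\vee b$ works: $u\vee c\ge(u\wedge a)\vee c_1=a$ and $u\vee c\ge b$, so $u\vee c=d$; and $(v\vee c)\wedge a=(v\wedge a)\vee c_1\vee(b\wedge a)=(v\wedge a)\vee g\vee c_1\neq a$, so $v\vee c\neq d$.

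Case $u\le v\vee g$ (with $u\not\le v$). This is where I expect the real difficulty, and it forces $g\neq0$. From $u=(u\wedge v)\vee(u\wedge g)$ one gets $u\wedge g\not\le v$, while $u\wedge g\le g\le a,b$. A one-sided lift fails here: separating $u\wedge g$ from $v\wedge a$ by some $c_1\le a$ and taking $c=c_1\vee b$ does not work, because $(v\vee c)\wedge a$ then acquires the term $b\wedge a=g$, which may push it up to $a$. The remedy is to choose the separators in a dependent order. First, subfitness of $b$ (applied to $u\wedge g\not\le v\wedge b$) gives $c_2\le b$ with $(u\wedge g)\vee c_2=b$ and $(v\wedge b)\vee c_2\neq b$. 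The key observation---the step I would be most careful with---is that this forces $u\wedge g\not\le v\vee c_2$: otherwise $b=(u\wedge g)\vee c_2\le v\vee c_2$, and meeting with $b$ would give $(v\wedge b)\vee c_2=b$, a contradiction. Since $(v\wedge a)\vee(c_2\wedge a)=a\wedge(v\vee c_2)$ and $u\wedge g\le a$, it follows that $u\wedge g\not\le(v\wedge a)\vee(c_2\wedge a)$ in ${\downarrow}a$, so subfitness of $a$ yields $c_1\le a$ with $(u\wedge g)\vee c_1=a$ and $(v\wedge a)\vee(c_2\wedge a)\vee c_1\neq a$. Then $c:=c_1\vee c_2\le d$ works: $u\vee c\ge(u\wedge g)\vee c_1=a$ and $u\vee c\ge(u\wedge g)\vee c_2=b$, so $u\vee c=d$; while $(v\vee c)\wedge a=(v\wedge a)\vee c_1\vee(c_2\wedge a)\neq a$, so $v\vee c\neq d$. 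The main obstacle is precisely this last case: realizing that no single separator can be lifted, and that one must commit to the $b$-side separator first and then separate on the $a$-side against $(v\wedge a)\vee(c_2\wedge a)$ rather than against $v\wedge a$.
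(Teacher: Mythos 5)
Your proof is correct, and it takes a genuinely different route from the paper's. The paper reduces to $a\vee b=1$, first applies subfitness of ${\downarrow}b$ to the pair $(b\wedge t,\, b\wedge s)$ to get a separator $y$, and then branches on whether $s\vee y\vee a=1$; in the hard branch it builds the auxiliary element $w=(b\wedge s)\vee y$, proves $b\leq (a\wedge t)\vee w$, and feeds $a\wedge t$ versus $a\wedge(s\vee w)$ into the subfitness of ${\downarrow}a$. You instead branch \emph{up front} on the intrinsic condition $u\leq v\vee g$ with $g=a\wedge b$, which cleanly isolates the easy case (one application of subfitness on a single side, padded by the whole of the other generator) from the hard one. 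In the hard case your key move is to work with the single element $u\wedge g$, which lies below both $a$ and $b$ and is joined up to $b$ by $c_2$ and to $a$ by $c_1$; the dependence of the second separation on the first (separating $u\wedge g$ from $(v\wedge a)\vee(c_2\wedge a)$ rather than from $v\wedge a$) plays the same structural role as the paper's element $w$, but the bookkeeping is lighter and the final witness $c_1\vee c_2$ is more symmetric. Both arguments are first-order, constructive, and use exactly two applications of the hypothesis in the worst case; yours arguably makes the logical shape of the proof (why a single lifted separator cannot work, and what the second separation must be measured against) more transparent, while the paper's normalization to $a\vee b=1$ keeps the notation slightly leaner. Your preliminary reformulation of subfitness in the form ``$x\nleq y$ implies there is $c$ with $x\vee c=1$ and $y\vee c\neq 1$'' is the same one the paper uses implicitly, and your justification of its equivalence with the ``exactly one'' definition is sound.
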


\begin{proof}
Let $a,b\in S$. To see that $a\vee b\in S$, it is sufficient to assume that $a\vee b=1$ and prove that $A$ is \jsub.
Suppose $s,t\in A$ with $t\nleq s$. We seek an element $z\in A$ such that $s\vee z <1$ and $t\vee z= 1$.
 If $a\wedge t\le a\wedge s$ and $b\wedge t\le b\wedge s$, then by distributivity, $(a\vee b)\wedge t\le (a\vee b)\wedge s$, so $t\le s$, a contradiction. Therefore,
we may assume without loss of generality that $b\wedge t\not\le b\wedge s$. Since ${\downarrow}b$ is \jsub, there is an element $y\leq b$ such that
\[
(b\wedge s)\vee y < b \mbox{ and } (b\wedge t)\vee y = b.
\]
The latter, by distributivity, implies that $b\le t\vee y$. Moreover, if $b\le s\vee y$, then
\[
(b\wedge s)\vee y=(b\vee y)\wedge (s\vee y)\ge b,
\]
contradicting the former. Thus, we have
\begin{equation}\label{eqy}
b\nleq s\vee y\;\text{ and }\;b\leq t\vee y.
\end{equation}
If $s\vee y\vee a<1$, let $z=y\vee a$ and we are done. On the other hand, if  $s\vee y\vee a = 1$, then
\begin{equation}\label{eqwnew}
(b\wedge a) \vee (b\wedge s)\vee y = (b\wedge (a\vee s)) \vee y = (b\vee y) \wedge (a\vee s\vee y) = b.
\end{equation}
Hence, setting $w=(b\wedge s)\vee y$ and using (\ref{eqwnew}), we have:
\begin{equation}\label{eqww}
(b\wedge s)\vee w = (b\wedge s)\vee y < b \text{ and } (a\wedge b)\vee w = (a\wedge b) \vee (b\wedge s) \vee y = b.
\end{equation}
From the first part of (\ref{eqww}), $b\nleq s\vee w$. From the second part of (\ref{eqww}), $b\leq a\vee w$.
By (\ref{eqy}),
\begin{equation}\label{eqwnew1}
t\vee w = t \vee (b\wedge s) \vee y = (t\vee b\vee y) \wedge (t\vee s\vee y) \ge b.
\end{equation}
Therefore, $b\leq (a\vee w) \wedge (t\vee w) = (a\wedge t)\vee w$.
Thus, $(a\wedge t)\vee w \nleq s\vee w$, so $a\wedge t \nleq s\vee w$, and hence $a\wedge t\nleq a \wedge (s\vee w)$. By \jsub ness of ${\downarrow}a$, there is  $x\leq a$ such that $(a\wedge t)\vee x=a$ and $(a\wedge (s\vee w))\vee x <a$,  i.e.,   $(a\wedge s) \vee (a\wedge w) \vee  x <a$.
If $a\le s\vee w\vee x$, then
\[
a\le (a\wedge s) \vee (a\wedge w) \vee (a\wedge x) = (a\wedge s) \vee (a\wedge w) \vee x,
\]
a contradiction. So $a \not\le s \vee w \vee  x$, and hence
$s\vee w \vee x <1$.  Also, $b\leq t\vee w$ by (\ref{eqwnew1}) and $a\leq t\vee x$ (because $a=(a\wedge t)\vee x$), so $t\vee w\vee x \ge a \vee b = 1$.
Let $z=w\vee x$, and we are done.
\end{proof}

\begin{remark}
To explain the logic of the proof, suppose $a, b, s, t\in A$ with $t\nleq s$.  If there is $x\leq a$ such that
$(a\wedge s)\vee x < a$ and $(a\wedge t)\vee x = a$, and there is $y\leq b$ such that
$(b\wedge s)\vee y < b$ and $(b\wedge t)\vee y = b$, then clearly, $t\vee x\vee y = a\vee b$, but it may also be that  $s\vee x\vee y = a\vee b$ (rather than $s\vee x\vee y < a\vee b$, as we desire).  This possibility accounts for some of the complexity in the proof above.

\end{remark}

\begin{remark}
The proof of \cref{SubfitIsJoinStableInDLat} is first-order and choice-free. In \cref{secTopology} we will give an alternative
proof using Stone duality.
\end{remark}

\noindent
Since each frame is a bounded distributive lattice, as an immediate consequence of  \cref{SubfitIsJoinStableInDLat} we obtain:

\begin{corollary}
    The set of \jsub\ elements of each frame is an ideal.
\end{corollary}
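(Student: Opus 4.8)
The plan is to obtain this as an immediate consequence of \cref{SubfitIsJoinStableInDLat}. First I would recall that a frame is a complete lattice in which finite meets distribute over arbitrary joins; in particular it has a least element $0$ and a greatest element $1$, and it satisfies the finitary distributive law $a\wedge(b\vee c)=(a\wedge b)\vee(a\wedge c)$, so its underlying order is a bounded distributive lattice.

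Next I would observe that \jsub ness of an element $a$ is defined purely in terms of the join-semilattice reduct together with the top element: it asks that ${\downarrow}a$, with absorbing element $a$, be a subfit semilattice. This data is the same whether we regard the frame as a frame or merely as its underlying bounded distributive lattice. Hence an element is \jsub\ in the frame exactly when it is \jsub\ in the associated bounded distributive lattice, and the set $S$ of \jsub\ elements is literally the same set in both readings.

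It then suffices to apply \cref{SubfitIsJoinStableInDLat}, which asserts that $S$ is an ideal of the bounded distributive lattice, that is, a nonempty downset closed under binary joins. This is precisely the claimed statement for the frame, completing the argument. I expect no real obstacle: the only thing to check is the harmless remark that \jsub ness does not see the infinitary structure a frame carries beyond its bounded-distributive-lattice reduct, and since the proof of \cref{SubfitIsJoinStableInDLat} is first-order and choice-free, so is this deduction.
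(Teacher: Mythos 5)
Your argument is correct and is exactly the paper's: the corollary is stated as an immediate consequence of \cref{SubfitIsJoinStableInDLat}, using only the observation that every frame is a bounded distributive lattice and that \jsub ness depends only on that reduct. Nothing further is needed.
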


\section{Subfit elements in distributive semilattices are not directed} \label{DSLatCounterExample}

\noindent
We now give an example of a distributive join-semilattice $A$ for which the set of \jsub\ elements is not an ideal.
We present the example in the order-dual setup of meet-semilattices as this is closer to the intuition.
We thus give an example of a distributive meet-semilattice whose set of \msub\ elements is not a filter;
in fact, in this example there are elements $a$ and $b$ such that $a \wedge b =0$, both ${\uparrow}a$ and ${\uparrow}b$ are \msub, and yet $A$ is not \msub.

We construct $A$ as a meet-subsemilattice of the powerset $\mathcal P(\N)$ of $\N$.
Let $\mathcal F$ denote the set of finite subsets of $\N_{\geq 3}$ and $\mathcal C_{012}$ the set of cofinite subsets of $\N$ that contain $\{0,1,2\}$.  Let
\begin{itemize}
\item[] $\mathcal F_0:=\{\,F\cup \{0\}\mid F\in \mathcal F\,\}$
\item[] $\mathcal F_1:=\{\,F\cup \{1\}\mid F\in \mathcal F\,\}$
\item[] $\mathcal F_{01}:=\{\,F\cup \{0,1\}\mid F\in \mathcal F\,\}$
\item[] $\mathcal F_{12}:=\{\,F\cup \{1,2\}\mid F\in \mathcal F\,\}$
\end{itemize}
and let
\[
A := \mathcal F \cup \mathcal F_0 \cup \mathcal F_1  \cup \mathcal F_{01}  \cup \mathcal F_{12}  \cup \mathcal C_{012}.
\]
An illustration is given in \cref{fig:DA of fin/cofin}, where the cup shaped bubbles represent sets of finite sets and the unique umbrella shape at the top is a set of cofinite sets.
We set $a:= \{0\},\, b:=\{1\},\, c:=\{1,2\}$, and $\mathcal F_{012}:=\{\,F\cup \{0,1,2\}\mid F\in \mathcal F\,\}$.
Then $a,b,c\in A$ and $A$ is depicted in the black part of the figure. The set $\mathcal F_{012}$ is disjoint from $A$, but is contained in the sublattice of $\mathcal P(\N)$
generated by $A$.
\begin{figure}[!h]
\begin{center}
\hspace*{-0.7cm}
\begin{tikzpicture}[scale=0.334]

\begin{scope}[shift={(-1,-10)}]
    \draw (-2,0) arc [radius=2, start angle=180, end angle=360];
    \begin{scope}[yscale=0.7]
           \draw (-2,0) to [out=45, in=180] (-1.6,0.25);
            \draw (-1.6,0.25) to [out=0, in=135] (-1.2,0);
            \draw (-1.2,0) to [out=315, in=180] (-0.8,-0.25);
            \draw (-0.8,-0.25) to [out=0, in=45] (-0.4,0);
            \draw (-0.4,0) to [out=45, in=180] (0,0.25);
            \draw (0,0.25) to [out=0, in=135] (0.4,0);
            \draw (0.4,0) to [out=315, in=180] (0.8,-0.25);
            \draw (0.8,-0.25) to [out=0, in=45] (1.2,0);
            \draw (1.2,0) to [out=45, in=180] (1.6,0.25);
            \draw (1.6,0.25) to [out=0, in=135] (2,0);
        \end{scope}
\draw [fill=black] (0,-2) circle (2.0pt);
\node [below] at (0,-2) {$\scriptstyle \varnothing$};
\node at (0,-1) {$\scriptstyle \mathcal F$};
\end{scope}
\begin{scope}[shift={(-6,-5)}]
    \draw (-2,0) arc [radius=2, start angle=180, end angle=360];
    \begin{scope}[yscale=0.7]
           \draw (-2,0) to [out=45, in=180] (-1.6,0.25);
            \draw (-1.6,0.25) to [out=0, in=135] (-1.2,0);
            \draw (-1.2,0) to [out=315, in=180] (-0.8,-0.25);
            \draw (-0.8,-0.25) to [out=0, in=45] (-0.4,0);
            \draw (-0.4,0) to [out=45, in=180] (0,0.25);
            \draw (0,0.25) to [out=0, in=135] (0.4,0);
            \draw (0.4,0) to [out=315, in=180] (0.8,-0.25);
            \draw (0.8,-0.25) to [out=0, in=45] (1.2,0);
            \draw (1.2,0) to [out=45, in=180] (1.6,0.25);
            \draw (1.6,0.25) to [out=0, in=135] (2,0);
        \end{scope}
 \draw [fill=black] (0,-2) circle (2.0pt);
\node [below] at (0,-2) {$\scriptstyle a$};
\node at (0,-1) {$\scriptstyle \mathcal F_0$};
\end{scope}
\begin{scope}[shift={(4,-5)}]
    \draw (-2,0) arc [radius=2, start angle=180, end angle=360];
    \begin{scope}[yscale=0.7]
           \draw (-2,0) to [out=45, in=180] (-1.6,0.25);
            \draw (-1.6,0.25) to [out=0, in=135] (-1.2,0);
            \draw (-1.2,0) to [out=315, in=180] (-0.8,-0.25);
            \draw (-0.8,-0.25) to [out=0, in=45] (-0.4,0);
            \draw (-0.4,0) to [out=45, in=180] (0,0.25);
            \draw (0,0.25) to [out=0, in=135] (0.4,0);
            \draw (0.4,0) to [out=315, in=180] (0.8,-0.25);
            \draw (0.8,-0.25) to [out=0, in=45] (1.2,0);
            \draw (1.2,0) to [out=45, in=180] (1.6,0.25);
            \draw (1.6,0.25) to [out=0, in=135] (2,0);
        \end{scope}
\draw [fill=black] (0,-2) circle (2.0pt);
\node [below] at (0,-2) {$\scriptstyle b$};
\node at (0,-1) {$\scriptstyle \mathcal F_1$};
\end{scope}
\begin{scope}[shift={(-1,0)}]

    \draw (-2,0) arc [radius=2, start angle=180, end angle=360];
    \begin{scope}[yscale=0.7]
           \draw (-2,0) to [out=45, in=180] (-1.6,0.25);
            \draw (-1.6,0.25) to [out=0, in=135] (-1.2,0);
            \draw (-1.2,0) to [out=315, in=180] (-0.8,-0.25);
            \draw (-0.8,-0.25) to [out=0, in=45] (-0.4,0);
            \draw (-0.4,0) to [out=45, in=180] (0,0.25);
            \draw (0,0.25) to [out=0, in=135] (0.4,0);
            \draw (0.4,0) to [out=315, in=180] (0.8,-0.25);
            \draw (0.8,-0.25) to [out=0, in=45] (1.2,0);
            \draw (1.2,0) to [out=45, in=180] (1.6,0.25);
            \draw (1.6,0.25) to [out=0, in=135] (2,0);

        \end{scope}
\draw [fill=black] (0,-2) circle (2.0pt);
\node [below] at (0,-2.25) {$\scriptstyle a \vee b$};
\node at (0,-1) {$\scriptstyle \mathcal F_{01}$};
\end{scope}

\begin{scope}[shift={(9,0)}]
    \draw (-2,0) arc [radius=2, start angle=180, end angle=360];
    \begin{scope}[yscale=0.7]
           \draw (-2,0) to [out=45, in=180] (-1.6,0.25);
            \draw (-1.6,0.25) to [out=0, in=135] (-1.2,0);
            \draw (-1.2,0) to [out=315, in=180] (-0.8,-0.25);
            \draw (-0.8,-0.25) to [out=0, in=45] (-0.4,0);
            \draw (-0.4,0) to [out=45, in=180] (0,0.25);
            \draw (0,0.25) to [out=0, in=135] (0.4,0);
            \draw (0.4,0) to [out=315, in=180] (0.8,-0.25);
            \draw (0.8,-0.25) to [out=0, in=45] (1.2,0);
            \draw (1.2,0) to [out=45, in=180] (1.6,0.25);
            \draw (1.6,0.25) to [out=0, in=135] (2,0);
        \end{scope}
\draw [fill=black] (0,-2) circle (2.0pt);
\node [below] at (0,-2) {$\scriptstyle c$};
\node at (0,-1) {$\scriptstyle \mathcal F_{12}$};
\end{scope}

\begin{color}{red}
\begin{scope}[shift={(4,5)}]
    \draw (-2,0) arc [radius=2, start angle=180, end angle=360];
    \begin{scope}[yscale=0.7]
           \draw (-2,0) to [out=45, in=180] (-1.6,0.25);
            \draw (-1.6,0.25) to [out=0, in=135] (-1.2,0);
            \draw (-1.2,0) to [out=315, in=180] (-0.8,-0.25);
            \draw (-0.8,-0.25) to [out=0, in=45] (-0.4,0);
            \draw (-0.4,0) to [out=45, in=180] (0,0.25);
            \draw (0,0.25) to [out=0, in=135] (0.4,0);
            \draw (0.4,0) to [out=315, in=180] (0.8,-0.25);
            \draw (0.8,-0.25) to [out=0, in=45] (1.2,0);
            \draw (1.2,0) to [out=45, in=180] (1.6,0.25);
            \draw (1.6,0.25) to [out=0, in=135] (2,0);
        \end{scope}
        \draw [fill=red] (0,-2) circle (2.0pt);
\node [below] at (0,-2.25) {$\scriptstyle a \vee c$};
\node at (0,-1) {$\scriptstyle \mathcal F_{012}$};
\end{scope}
 \end{color}{red}
\begin{scope}[shift={(4,10)}]
    \begin{scope}[yscale=-1]
        \draw (-2,0) arc [radius=2, start angle=180, end angle=360];
        \begin{scope}[yscale=0.7]
               \draw (-2,0) to [out=45, in=180] (-1.6,0.25);
                \draw (-1.6,0.25) to [out=0, in=135] (-1.2,0);
                \draw (-1.2,0) to [out=315, in=180] (-0.8,-0.25);
                \draw (-0.8,-0.25) to [out=0, in=45] (-0.4,0);
                \draw (-0.4,0) to [out=45, in=180] (0,0.25);
                \draw (0,0.25) to [out=0, in=135] (0.4,0);
                \draw (0.4,0) to [out=315, in=180] (0.8,-0.25);
                \draw (0.8,-0.25) to [out=0, in=45] (1.2,0);
                \draw (1.2,0) to [out=45, in=180] (1.6,0.25);
                \draw (1.6,0.25) to [out=0, in=135] (2,0);
            \end{scope}
\end{scope}
\node at (0,1) {$\scriptstyle \mathcal C_{012}$};
\end{scope}

\draw [fill=black] (4,12) circle (2.0pt);
        \node [above] at (4,12) {$\scriptstyle \N$};

\draw[dashed] (2,5)--(2,10);
\draw[dashed] (6,5)--(6,10);

\draw (-6,-7)--(-1,-12)--(4,-7)--(9,-2);
\draw (-6,-7)--(-1,-2)--(4,-7);

\draw[dashed] (-8,-5)--(-3,0);
\draw[dashed] (2,-5)--(7,0);

\begin{color}{red}
\draw[dashed] (-3,0)--(2,5);
\draw[dashed] (11,0)--(6,5);
\draw (-1,-2)--(4,3)--(9,-2);
\end{color}{red}

\end{tikzpicture}
\caption{The meet-semilattice $A$ (black) within its distributive lattice envelope (black and red)}
         \label{fig:DA of fin/cofin}
         \end{center}
          \end{figure}
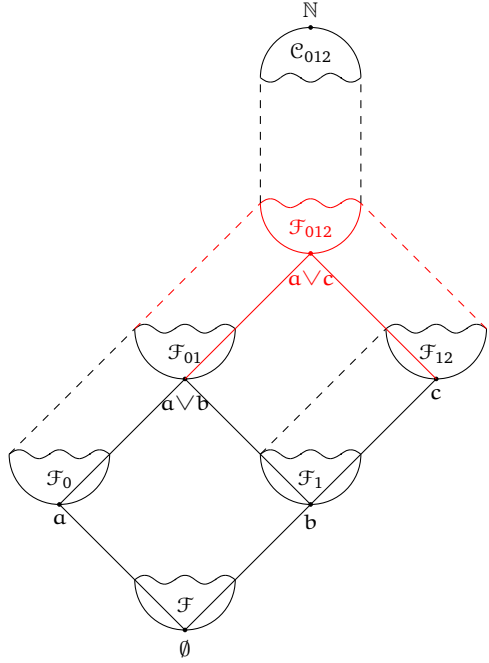
\noindent
The \textit{finite} sets in $A$ are those finite $E\subseteq \N$  such that
\[
E\cap\{0,1,2\}\in A_{012}:= \{\varnothing, \{0\}, \{1\}, \{0,1\}, \{1,2\}\}.
\]
Since $A_{012}$ is closed under intersection,  the intersection of any two finite elements of $A$ is an element of $A$.  It is clear that the intersection of a finite element with an infinite element, or the intersection of two infinite elements, is an element of $A$. Hence, $A$ is a bounded meet-subsemilattice of $\mathcal P(\N)$ under the operation of intersection.

Let $B$ be the sublattice of $\mathcal P(\N)$ generated by $A$. The set $B\setminus A$ is equal to $\mathcal F_{012}$ and is depicted in the diagram in red color.
In the language of \cref{secEnvelope}, $B$ is the distributive lattice envelope of $A$.

\noindent
Below, upsets are meant within the poset $A$.
Since $a= \{0\}\in A$ and $b=\{1\}\in A$,  we have
\[
{\uparrow} a =  \mathcal F_0  \cup \mathcal F_{01}  \cup \mathcal C_{012},\quad\text{and}\quad {\uparrow} b =  \mathcal F_{1} \cup \mathcal F_{01}  \cup \mathcal F_{12}  \cup \mathcal C_{012}.
\]
\noindent
{\it Claim 1}.\quad
    ${\uparrow} a$ is \msub.
\begin{proof}
    Suppose $x,y\in {\uparrow} a$ and $y\nsubseteq  x$.  We need to find $z\in {\uparrow}a$ with $x\cap z=a$ and $y\cap z\neq a$.
    Pick $n\in y\setminus x$. (Of course, $n\neq 0$.)  If $n\neq 2$, let $z=\{0,n\}$.  If $n=2$, then $y$ is cofinite and $x$ is finite. 
    Therefore, $y\setminus x$ contains an integer $m\geq 3$, and we set $z=\{0,m\}$.
\end{proof}
\noindent
{\it Claim 2}.\quad
    ${\uparrow} b$ is \msub.
\begin{proof}
    Suppose $x,y \in {\uparrow} b$ and $y\nsubseteq x$.  Pick $n\in y\setminus x$. Then $n\neq 1$, and $\{1,n\}\in {\uparrow} b$.  Set $z=\{1,n\}$.
\end{proof}
\noindent
{\it Claim 3}.\quad
    $A$ is not \msub.
\begin{proof}
    Consider $y=\{1,2\}$ (i.e.~$y=c$) and let $x$ be any element of $A$ that contains $1$ but not $2$.
    Then $y\nsubseteq x$, but the only non-zero elements of $A$ that are less than or equal to $y$ are $y$, $\{1\}$ and $\varnothing$, so the desired $z$ does not exist.
\end{proof}
\noindent
{\it Claim 4}.\quad
    ${\uparrow} a$ is a distributive lattice.
\begin{proof}
    In fact, ${\uparrow} a$ is a sublattice of $\mathcal P(\N)$, so it's a distributive lattice (with bottom element $a$ and top $\N$).
\end{proof}

\smallskip\noindent
{\it Claim 5}.\quad
    ${\uparrow} b$ is a distributive semilattice.

\begin{proof}
    Suppose $x,y,z\in \up b$ and $x\cap y\subseteq z$.  We must produce $x'\supseteq x$ and $y'\supseteq y$ with $x'\cap y'=z$. We have two cases:
\begin{enumerate}[(a)]
\item Suppose $z\in \mathcal F_1$ or  $z\in \mathcal C_{012}$. Then $u\cup z\in \up b$ for all $u\in \up b$.   Therefore, we may set $x'=x\cup z$ and $y'=y\cup z$.
\item  Suppose $z\in \mathcal F_{01}$ or $z\in \mathcal F_{12}$.  These cases are symmetric under the transposition of $0$ and $2$, so we may assume without loss of generality that $z\in \mathcal F_{01}$. Then $u\cup z\in \up b$ unless $u\in \mathcal F_{12}$.
    If neither $x$ nor $y$ is in $\mathcal F_{12}$, we may set $x'=x\cup z$ and $y'=y\cup z$.  Otherwise, without loss of generality $x\in \mathcal F_{12}$.  In this case, we must have $y\in \mathcal F_1\cup\mathcal F_{01}$ because $2\not\in z$, and therefore $2\not\in x\cap y$.  We set  $x' = \{0,1,2\}\cup(\N_{\geq 3}\setminus y)\cup x\cup z$ and $y'=y\cup z$.
    Then $x'\supseteq x$, $y'\supseteq y$, and $ x'\cap y'$ is the union of the following sets:
\begin{align*}
\{0,1,2\}\cap(y\cup z) &= \{0,1\}\ (\subseteq z),\\
\left(\N_{\geq 3}\setminus y\right)\cap(y\cup z)&=( \N_{\geq 3}\setminus y)\cap z,\\
(x\cup z)\cap(y\cup z)&=z,
\end{align*}
so $x'\cap y'=z$.\qedhere
\end{enumerate}
\end{proof}
\noindent
It is useful to note that $A_{012}=\{\varnothing, \{0\}, \{1\}, \{0,1\}, \{1,2\}\}$ as defined above is closed under all unions \textit{except}
$\{0\}\cup \{1,2\}$ and $\{0,1\}\cup\{1,2\}$.  Thus, a union of two elements of $A$ belongs to $A$ unless one belongs to $\mathcal F_0$ and the other to $\mathcal F_{12}$, or,
one belongs to $\mathcal F_{01}$ and the other to $\mathcal F_{12}$.

\medskip\smallskip\noindent
{\it Claim 6}.\quad
    $A$ is a distributive semilattice.

\begin{proof}
    Suppose $x,y,z\in A$ and $x\cap y\subseteq z$.  We must produce $x'\supseteq x$ and $y'\supseteq y$ with $x'\cap y'=z$.  We have four cases.
 \begin{enumerate}[(i)]
\item Suppose $z\in \mathcal F\cup \mathcal F_1  \cup \mathcal C_{012}$. Then $u\cup z\in A$ for all $u\in A$.   Therefore, we may set $x'=x\cup z$ and $y'=y\cup z$.

\item Suppose  $z\in \mathcal F_0$.  Then $u\cup z\in A$ and we argue as in (i), unless $u\in \mathcal F_{12}$.  If one of $x,y \in \mathcal F_{12}$, we may assume without loss of generality that $x\in \mathcal F_{12}$.  Then $y\in\mathcal F\cup\mathcal F_0$.  In this case, let
 $x' = \{0,1,2\}\cup(\N_{\geq 3}\setminus y)\cup x\cup z$,  and  $y'=y\cup z$, and argue as in Claim 5(b).

\item Suppose $z\in \mathcal F_{01}$. In this case $u\cup z\in A$  and we argue as in (i), unless $u\in \mathcal F_{12}$.
If $x\in \mathcal F_{12}$, then $y\in\mathcal F\cup\mathcal F_0\cup\mathcal F_1\cup\mathcal F_{01}$ and we proceed as in (ii).

\item Suppose $z\in \mathcal F_{12}$. In this case $u\cup z\in A$  and we argue as in (i), unless $u\in \mathcal F_0\cup \mathcal F_{01}$.  We cannot have both $x$ and $y$ in $\mathcal F_0\cup \mathcal F_{01}$ because then $x\cap y$ would contain $0$ and would not be a subset of $z$.  So, suppose that $x\in \mathcal F_0\cup \mathcal F_{01}$.  Then $y\in \mathcal F_{1}\cup \mathcal F_{12}$ and we proceed as in (ii).\qedhere
\end{enumerate}
\end{proof}

\noindent
A topological perspective on this example is presented at the end of \cref{secTopology}. 

\section{Distributive envelopes and subfitness} \label{secEnvelope}

\noindent
In this section
we show that to each join-semilattice $A$ -- distributive or not -- one can associate a distributive lattice $L$ such that $A$ is join-subfit iff $L$ is join-subfit (and analogously for meets); see \cref{SubfitMirroredInDenvelope}. The construction of $L$ from $A$ is functorial and will be studied in a forthcoming paper. If $A$ happens to be weakly distributive in the sense of \cite{balbes_representation_1969,varlet_on_separation_1975}, then $L$ is the distributive envelope of $A$ defined in
\cite{cornish_weakly_1978} (and also studied in \cite{hansoul_priestley_2008,bezhanishvili_priestley_2011}); we won't use this explicitly but leave comments en route to show the connection. Instead we will take $L$ as the sublattice generated by $A$ in its injective hull following \cite{bruns_injective_1970,horn_the_category_1971} in its incarnation for join-semilattices.

We need one notion:
We call a subset $S$ of a join-semilattice $A$ \textbf{admissible} if $a=\bigwedge_AS$ exists and
for every $b\in A$ the set $S\lor b:=\{s\lor b\st s\in S\}$ has infimum $a\lor b$. 
This property plays a key role in \cite{bruns_injective_1970,horn_the_category_1971} and has been considered already by MacNeille in \cite[Def.~3.10]{macneille_partially_1937}.\footnote{Hence, by definition, a join-semilattice $A$ is weakly distributive if every finite subset that has an infimum in $A$ is admissible.}

\goodbreak
\begin{proposition}\label{SubfitExt}
Let $A\subseteq B$ be an extension of bounded join-semilattices such that:
\begin{enumerate}[{\rm (a)}]

\item\label{SubfitExtT}
for every $b\in B$ there are $a_1,\ldots,a_n\in A$ such that
for all $a\in A$ we have
\[
a\lor b=\bigwedge\nolimits_B \{ a\lor a_1,\ldots,a\lor a_n\},\footnote{If $B$ is weakly distributive, this simply says that $A$ is finitely meet-dense in $B$, i.e. every element of $B$ is a finite meet of elements from $A$.}
\]

\item\label{SubfitExtDist}
for every finite admissible subset $F$ of $A$, $\bigwedge_A F = \bigwedge_B F$.
\end{enumerate}
Then $A$ is \jsub\ if and only if $B$ is \jsub.
\end{proposition}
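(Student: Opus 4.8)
The plan is to use the standard reformulation of join-subfitness: a bounded join-semilattice $C$ with top $1$ is \jsub\ iff for all $s,t\in C$ with $t\nleq s$ there is $z\in C$ with $t\vee z=1$ and $s\vee z\ne 1$. One implication is immediate; for the other, given $t\nleq s$ one feeds the distinct pair $\{t\vee s,\,s\}$ into the definition, obtains $c$ with exactly one of $(t\vee s)\vee c$ and $s\vee c$ equal to $1$, notes that $(t\vee s)\vee c\ge s\vee c$ forces the former to be it, and takes $z:=s\vee c$. Negated: $C$ is \emph{not} \jsub\ iff there are $t\nleq s$ in $C$ with $t\vee z=1\Rightarrow s\vee z=1$ for all $z\in C$. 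I would also record that $1_A=1_B$ (apply (a) to $b=1_B$ with $a=1_A$, using $1_A\vee a_i=1_A$), so that ``$1$'' is unambiguous, and that a finite meet in $B$ equals $1$ iff all its terms do.

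For ``$B$ \jsub\ $\Rightarrow$ $A$ \jsub'' only (a) is needed. Given $s,t\in A$ with $t\nleq s$, use subfitness of $B$ to pick $z\in B$ with $t\vee z=1\ne s\vee z$, and apply (a) to $z$ to get $a_1,\dots,a_n\in A$ with $a\vee z=\bigwedge_B\{a\vee a_1,\dots,a\vee a_n\}$ for every $a\in A$. Taking $a=t$ forces $t\vee a_i=1$ for all $i$; taking $a=s$ forces $s\vee a_{i_0}\ne 1$ for some $i_0$; then $a_{i_0}\in A$ witnesses $t\nleq s$ in $A$.

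For the converse ``$A$ \jsub\ $\Rightarrow$ $B$ \jsub'' I would argue by contraposition, and this is where (b) enters. Suppose $B$ is not \jsub, witnessed by $t\nleq s$ in $B$ with $t\vee z=1\Rightarrow s\vee z=1$ for all $z\in B$. By (a) write $s=\bigwedge_B\{c_1,\dots,c_r\}$ with $c_\rho\in A$, and choose $u:=c_{\rho_0}$ with $t\nleq u$; since $s\le u$, also $t\vee z=1\Rightarrow u\vee z=1$ for all $z\in B$. By (a) applied to $t$, fix $d_1,\dots,d_q\in A$ with $a\vee t=\bigwedge_B\{a\vee d_1,\dots,a\vee d_q\}$ for all $a\in A$; then each $d_\kappa\nleq u$, and $(u\vee z)\vee t=\bigwedge_B\{(u\vee z)\vee d_\kappa\}$ for each $z\in A$. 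The natural candidate for a pair witnessing failure of subfitness in $A$ is $(t\vee u,\,u)$: clearly $t\vee u\nleq u$, and if $z\in A$ satisfies $(t\vee u)\vee z=1$ then the last identity gives $(u\vee z)\vee d_\kappa=1$ for all $\kappa$, so $u\vee z$ lies in $\{w\in A: w\vee d_\kappa=1\ \forall\kappa\}=\{w\in A: w\vee t=1\}\subseteq\{w\in A: w\vee u=1\}$, hence $u\vee z=1$.

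So $(t\vee u,u)$ does the job provided $t\vee u\in A$, and establishing this is the main obstacle. Now $t\vee u=\bigwedge_B\{d_1\vee u,\dots,d_q\vee u\}$ is a finite meet in $B$ of elements of $A$, and condition (b) is exactly what forces such a meet to be computed in $A$ — once the family in question is known to be admissible in $A$. Thus the heart of the proof is to choose the representing family for $t$ in (a) so that $\{d_1\vee u,\dots,d_q\vee u\}$ is admissible in $A$ (this is where the precise interplay of (a) and (b), and in the applications the construction of $B$ inside the injective hull of $A$, is genuinely used); granting it, (b) gives $\bigwedge_A\{d_\kappa\vee u\}=\bigwedge_B\{d_\kappa\vee u\}=t\vee u\in A$, completing the contrapositive. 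I would expect the brute-force alternative — joining together subfit-witnesses for the individual $d_\kappa$ — to break down for precisely the reason highlighted in the Remark after \cref{SubfitIsJoinStableInDLat}.
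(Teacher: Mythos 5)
Your easy direction ($B$ \jsub\ implies $A$ \jsub) is correct and essentially the paper's own argument. The hard direction, however, has a genuine gap at exactly the point you flag yourself: you reduce everything to showing $t\vee u\in A$, i.e.\ to choosing the representing family $d_1,\dots,d_q$ for $t$ so that $\{d_1\vee u,\dots,d_q\vee u\}$ is admissible in $A$, and you offer no argument for this. None is available from the hypotheses: condition (a) only asserts the existence of \emph{some} representing family and gives no control over admissibility of derived families, and $t\vee u$ is in general a genuinely new element of $B\setminus A$ (in the intended application $B$ is the sublattice of the injective hull generated by $A$, which is strictly larger than $A$ precisely because such meets are missing from $A$). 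So ``granting it'' is not a deferrable technicality; it is the whole difficulty, and your route through it is blocked.

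The paper's proof turns this obstruction to its advantage and uses condition (b) in the opposite direction. Working directly rather than by contraposition, with $u\nleq v$ in $B$ and $v$ reduced to an element of $A$ as you also do, it observes that if for every $c\in A$ one had $c\lor v=\bigwedge_A\{c\lor a_1\lor v,\dots,c\lor a_n\lor v\}$, then $\{a_1\lor v,\dots,a_n\lor v\}$ would be admissible, so by (b) its meet in $B$ would also be $v$; but by (a) that meet is $v\lor u$, contradicting $u\nleq v$. Hence for some $c$ the infimum identity \emph{fails} in $A$, which produces a lower bound $w_0\in A$ of all the $c\lor a_i\lor v$ with $w_0\nleq c\lor v$. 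Subfitness of $A$ applied to the pair $(w_0,\,c\lor v)$ --- both of which lie in $A$ by construction --- yields $w_1$ with $c\lor v\lor w_1<1$ and $w_0\lor w_1=1$, and then $w:=c\lor v\lor w_1$ is the required witness in $B$. To repair your argument you should abandon the attempt to land the witnessing pair on $(t\vee u,u)$ and instead extract, as the paper does, a pair inside $A$ from the failure of admissibility.
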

\begin{proof}
First suppose $B$ is \jsub.
Take $u,v\in A$ with $u\nleq v$. Since $B$ is \jsub, there is $b\in B$ such that $v\lor b<1$ and $u\lor b=1$.
Take
$a_1,\ldots,a_n\in A$ for $b$ as in \ref{SubfitExtT}.
Then $\bigwedge_B \{ v\lor a_1, \ldots, v\lor a_n\} = v\lor b<1$, so $v\lor a_i<1$ for some $i$. Since $b\leq a_i$ (set $a=0$ in \ref{SubfitExtT}) we have $u\lor a_i=1$, yielding that $A$ is \jsub.

Conversely, suppose that $A$ is \jsub. Let $u,v\in B$ with $u\nleq v$.
We need to find $w\in B$ such that  $v\lor w<1$ and $u\lor w=1$.

By \ref{SubfitExtT} there are $\tilde a_1,\ldots,\tilde a_k\in A$ such that
$v=\bigwedge_B \{ \tilde a_1, \ldots, \tilde a_k \}$.
Since $u\nleq v$, there is $i$ with $u\nleq \tilde a_i$.
We may then replace $v$ by $\tilde a_i$ and assume that $v\in A$.
Using \ref{SubfitExtT} again, there are $a_1,\ldots,a_n\in A$ such that

\begin{enumerate}[leftmargin=5ex]
\item[($\dagger$)]
for all $c\in A$ we have
$c\lor u = \bigwedge_B \{c\lor a_1,\ldots,c\lor a_n\}$;
in particular, $u=\bigwedge_B\{a_1,\ldots,a_n\}$.
\end{enumerate}

\smallskip\noindent
{\em Claim}.
    There is $c\in A$ such that
$c\lor v$ is \textit{not} the infimum of $\{c\lor a_1\lor v,\ldots,c\lor a_n\lor v\}$
in $A$.

\begin{proof}[Proof of Claim]
    Suppose for all $c\in A$ we have
$c\lor v = \bigwedge_A \{c\lor a_1\lor v,\ldots,c\lor a_n\lor v\}$.
Then $v=\bigwedge\nolimits_A\{a_1\lor v,\ldots,a_n\lor v\}$ and
$\{a_1\lor v,\ldots,a_n\lor v\}$ is an admissible subset of $A$. Therefore,
by \ref{SubfitExtDist},
$v=\bigwedge\nolimits_B\{a_1\lor v,\ldots,a_n\lor v\}$.
On the other hand, by $(\dagger)$,
\[
\bigwedge\nolimits_B\{a_1\lor v,\ldots,a_n\lor v\}=v\lor \bigwedge\nolimits_B\{a_1,\ldots,a_n\}=v\lor u,
\]
which implies $u\leq v$, a contradiction.
\end{proof}

\noindent
Now take $c\in A$ as in the claim, i.e.~there is $w_0\in A$ such that $w_0\leq c\lor a_i\lor v$ for all $i$ but $w_0\nleq c\lor v$.
Because $A$ is \jsub, there is $w_1\in A$ with
$c\lor v\lor w_1<1$ and $w_0\lor w_1=1$.
Then $w:=c\lor v\lor w_1$ satisfies $v\lor w<1$ and
\begin{align*}
    u\lor w&=u\lor c\lor v\lor w_1\cr
    &=\bigwedge\nolimits_B\{a_1,\ldots,a_n\}\lor c\lor v\lor w_1\cr
    &\stackrel{(\dagger)}{=}\bigwedge\nolimits_B\{a_1\lor c\lor v,\ldots,a_n\lor c\lor v\}\lor w_1\cr
    &\geq w_0\lor w_1=1,
\end{align*}
showing that $B$ is \jsub.
\end{proof}

\noindent
We now invoke \cite{bruns_injective_1970} in its order-dual form or rather the following consequence:
Every join-semilattice $A$ has an injective hull $E$ in the category of join-semilattices with semilattice homomorphisms. Furthermore, the following properties hold:
\begin{enumerate}[leftmargin=5ex]
\item $E$ is a distributive lattice.\footnote{In fact it is a co-frame, i.e. the order-dual of a frame.}
\item For each admissible subset $S$ of $A$, the infimum of $S$ in $A$ is the infimum of $S$ in $E$.
\end{enumerate}
It follows that the sublattice of $E$ generated by $A$ has properties (a) and (b) of
\cref{SubfitExt}. We thus obtain the following consequence.

%

\begin{theorem}\label{SubfitMirroredInDenvelope}
Let $A$ be a bounded
join-semilattice and let $E$ be the injective hull of $A$.
We set $L$ to be the bounded sublattice of $E$ generated by $A$.
Then $L$ is a bounded distributive lattice and $A$ is \jsub\ if and only if $L$ is \jsub.\qed
\end{theorem}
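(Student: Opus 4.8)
The plan is to deduce \cref{SubfitMirroredInDenvelope} from \cref{SubfitExt} by verifying that the pair $A\subseteq L$ (with $L$ the bounded sublattice of the injective hull $E$ generated by $A$) satisfies hypotheses \ref{SubfitExtT} and \ref{SubfitExtDist}. The two cited facts about the injective hull do the heavy lifting: $E$ is a distributive lattice, so its sublattice $L$ is a bounded distributive lattice, giving the first assertion immediately; and the infimum of any admissible subset of $A$ computed in $A$ agrees with the one computed in $E$. Once \ref{SubfitExtT} and \ref{SubfitExtDist} are checked for $A\subseteq L$, \cref{SubfitExt} yields ``$A$ is \jsub\ iff $L$ is \jsub,'' which is exactly the claim.

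For hypothesis \ref{SubfitExtDist} I would argue as follows. Let $F$ be a finite admissible subset of $A$. By fact (2) of the injective-hull list, $\bigwedge_A F=\bigwedge_E F$. Since $F\subseteq A\subseteq L\subseteq E$ and $L$ is a sublattice of $E$ closed under the (finite) meets it inherits, the element $\bigwedge_E F$ lies in $L$ and serves as $\bigwedge_L F$; hence $\bigwedge_A F=\bigwedge_L F$, which is \ref{SubfitExtDist}. (A small point to be careful about: ``admissible'' is defined relative to $A$, and one must make sure the meet in question is genuinely the one taken inside $L$, not merely inside $E$ — but because $L$ is a sublattice of $E$, finite meets in $L$ are computed as in $E$, so there is no discrepancy.)

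For hypothesis \ref{SubfitExtT} the key is that $L$ is \emph{generated} as a bounded lattice by $A$ inside the distributive lattice $E$. Every element $b\in L$ is therefore a lattice polynomial in elements of $A$; using distributivity of $E$ one can put it in a normal form, and in particular write $b=\bigwedge_{j=1}^n a_j$ with each $a_j\in A$ a finite join of elements of $A$ — but since $A$ is a join-subsemilattice, each such finite join is again in $A$, so in fact $b=\bigwedge_{j=1}^n a_j$ with $a_1,\dots,a_n\in A$. (That $A$ is meet-dense in the sublattice it generates in a distributive lattice is the standard description of the envelope; here one just needs the crude form ``$b$ is a finite meet of elements of $A$.'') It then remains to upgrade this to the parametrized statement in \ref{SubfitExtT}: for all $a\in A$, $a\lor b=\bigwedge_E\{a\lor a_1,\dots,a\lor a_n\}$. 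This follows because in the distributive — indeed co-frame — lattice $E$, finite joins distribute over finite meets: $a\lor\bigwedge_j a_j=\bigwedge_j(a\lor a_j)$. So the $a_1,\dots,a_n$ witnessing that $b$ is a finite meet of elements of $A$ automatically witness \ref{SubfitExtT}.

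The main obstacle, such as it is, is bookkeeping rather than substance: one must be scrupulous about \emph{where} meets and joins are being computed (in $A$, in $L$, or in $E$) and about the fact that ``admissible'' is an intrinsic property of a subset of $A$ that is designed precisely so that passing to $E$ — and hence to $L$ — does not disturb the relevant infima. Given facts (1) and (2) about the injective hull and the distributivity of $E$, no genuinely new idea is needed; the proposition \cref{SubfitExt} is exactly engineered to absorb the rest. I would therefore present the argument as: (i) $L$ is a bounded distributive lattice since $E$ is; (ii) \ref{SubfitExtT} holds because $A$ generates $L$ inside the distributive lattice $E$ and joins distribute over finite meets there; (iii) \ref{SubfitExtDist} holds by fact (2) together with $L$ being a sublattice of $E$; (iv) apply \cref{SubfitExt}.
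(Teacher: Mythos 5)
Your proposal is correct and follows exactly the paper's route: the paper proves this theorem by citing the two injective-hull facts and observing that the sublattice $L$ generated by $A$ satisfies hypotheses (a) and (b) of \cref{SubfitExt}, then applying that proposition. Your write-up merely fills in the (routine) verification of (a) and (b) that the paper leaves implicit, and does so correctly.
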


\begin{remark}
As mentioned earlier, the construction of $L$ from $A$ in \cref{SubfitMirroredInDenvelope} is functorial, see for example
\cite[Cor.~3.13]{gehrke_distributive_2014}. Hence,
compared to our results in \cref{secLattice,DSLatCounterExample}, \cref{SubfitMirroredInDenvelope} is somewhat unexpected in that it asserts that
\jsub ness is transferable between $A$ and $L$,
yet in \cref{SubfitIsJoinStableInDLat} we have seen that \jsub\ elements of a distributive lattice do form an ideal,
whereas in \cref{DSLatCounterExample} we have seen that this is no longer the case in a distributive join-semilattice.
The crux of the matter is that the construction in \cref{SubfitMirroredInDenvelope} does not commute with taking principal downsets.
The situation with \msub ness is analogous.
\end{remark}

\bigskip

\section{Topological perspective} \label{secTopology}

\noindent
Join-semilattices occur naturally in non-Hausdorff topology. For a topological space $X$ (which always means T$_0$ here), the set
\[
\qcop(X)=\{U\subseteq X\st U\text{ is compact open}\}
\]
defines a join-semilattice, where the join is given by union. The space $X$ is
\textbf{compactly based} provided $\qcop(X)$ is a basis for $X$.
By the Gr\"atzer-Stone representation theorem (see  \cite[Thm. 191, p. 171]{gratzer_lattice_2011}), 
every bounded distributive join-semilattice  $A$ is isomorphic to $\qcop(X)$ for a unique
compactly based, compact, and sober space $X$ and each such space has a distributive $\qcop(X)$.
In this correspondence, by definition, $\qcop(X)$ is a lattice if and only if $X$ is a
\textbf{spectral space} in the sense of \cite{hochster_prime_1969}. The matter is outlined in more detailed in
\cite[3.7.2]{dickmann_spectral_2019}.

We may therefore describe the results of this paper in terms of the
join-semilattice $\qcop(X)=(\qcop(X),\cup )$ for a compact and compactly based space.
We point out that this section can be phrased in different languages, because compact, compactly based spaces come in many different flavors.
For example, as generalized Priestley spaces (see \cite{hansoul_priestley_2008,bezhanishvili_priestley_2011}), as spectra of predomains with proximity
(see \cite[Rem. 3.16]{bice_gratzer_2021}),
and as (point spaces of) algebraic frames (see, e.g., \cite[3.7.2]{dickmann_spectral_2019}).

We give abridged proofs with reference to \cite{dickmann_spectral_2019}, thereby providing access to the topological point of view. Notice that the results themselves are first-order and constructive. Only when they are linked to their lattice-theoretic counterparts, a second-order argument and the axiom of choice are needed.

\begin{remark}\label{TopGeneral}Let $X$ be any T$_0$-space.
\begin{enumerate}[(i)]
\item\label{TopGeneralDefnCP}
We write $\cp X$ for the subspace of closed points of $X$, i.e.~those points $x\in X$ for which $\{x\}$ is closed.
\item\label{TopGeneralCompact}
$X$ is compact if and only if $\cp X$ is compact and every nonempty closed subset of $X$ contains a closed point, cf.
\cite[4.1.2]{dickmann_spectral_2019}.
\item\label{TopGeneralCompactPatch}
The \textbf{patch-topology} of $X$ is defined to be the topology of $X$ generated by the sets $U\setminus V$, where $U,V\in\qcop(X)$.
Observe that if $X$ is spectral, this is the patch-topology in the sense of \cite{hochster_prime_1969,dickmann_spectral_2019}.
\end{enumerate}
\end{remark}

\noindent
Here is a characterization of \jsub ness of $\qcop(X)$ when $X$ is compact and compactly based.\looseness=-1
\begin{proposition}\label{CharacterizeQCOPsubfit}
Let $X$ be a compact and compactly based space.
Then $\qcop(X)$ is \jsub\ if and only if $\cp X$ is patch-dense.
\end{proposition}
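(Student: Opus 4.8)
The plan is to translate join-subfitness of $\qcop(X)$, via the complementation $W\leftrightarrow C=X\setminus W$, into a statement about \emph{co-compact closed} subsets of $X$ (closed sets whose complement is compact open), and then to pass between such sets and closed points using compactness of $X$ (\cref{TopGeneral}(ii)). A small preliminary reduction handles the patch side: by \cref{TopGeneral}(iii) a basic patch-open set has the form $\bigcap_{i=1}^{n}(U_i\setminus V_i)$ with $U_i,V_i\in\qcop(X)$, and if it is nonempty we may pick $p$ in it, use compact-basedness to find $U\in\qcop(X)$ with $p\in U\subseteq\bigcap_i U_i$, and set $V:=\bigcup_i V_i\in\qcop(X)$; then $p\in U\setminus V$, and $U\setminus V$ is contained in the given set. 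Thus $\cp X$ is patch-dense if and only if every nonempty set of the form $U\setminus V$ with $U,V\in\qcop(X)$ meets $\cp X$, i.e.\ if and only if whenever $U\nsubseteq V$ there is a closed point in $U\setminus V$.

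For the implication $(\Leftarrow)$, assume $\cp X$ is patch-dense and take distinct $U,V\in\qcop(X)$, say $V\nsubseteq U$ (otherwise swap, using the symmetry of the subfitness condition). Then $V\setminus U$ is a nonempty patch-open set, so it contains a closed point $x$. Since $\{x\}$ is closed and $x\in V$, the open set $X\setminus\{x\}$ contains the closed, hence compact, set $X\setminus V$; by compact-basedness finitely many compact opens contained in $X\setminus\{x\}$ cover $X\setminus V$, and their union $W\in\qcop(X)$ satisfies $X\setminus V\subseteq W$ and $x\notin W$. Then $V\cup W=X$, while $x\notin U\cup W$ gives $U\cup W\neq X$, so $W$ witnesses join-subfitness of $\qcop(X)$ for the pair $U,V$.

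For the implication $(\Rightarrow)$, assume $\qcop(X)$ is join-subfit and let $U\nsubseteq V$; we must produce a closed point in $U\setminus V$. Since $U\nsubseteq V$ we have $V\subsetneq U\cup V$, so $V\neq U\cup V$; applying join-subfitness to this pair yields $W\in\qcop(X)$ with exactly one of $V\cup W$ and $(U\cup V)\cup W$ equal to $X$. As $V\subseteq U\cup V$, the case $V\cup W=X$ would force $(U\cup V)\cup W=X$ as well, so it must be that $(U\cup V)\cup W=X$ and $V\cup W\neq X$. Put $C:=X\setminus W$; then $C\subseteq U\cup V$ and $C\nsubseteq V$, so $C\setminus V=C\cap(X\setminus V)$ is a nonempty closed subset of $X$ that is contained in $(U\cup V)\setminus V=U\setminus V$. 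Since $X$ is compact, \cref{TopGeneral}(ii) supplies a closed point of $X$ lying in $C\setminus V$, hence in $U\setminus V$, completing the reduction.

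I expect the main obstacle to be in $(\Rightarrow)$: join-subfitness only delivers a witness for the \emph{unordered} pair, and one has to arrange that the complement of the witness lands on the $U$-side rather than the $V$-side; invoking the principle on $(V,\,U\cup V)$ instead of $(U,V)$ is what forces this, after which compactness of $X$ is precisely what converts the resulting co-compact closed set into a genuine closed point. In $(\Leftarrow)$ the only step requiring care is the construction of the compact open $W$ with $X\setminus V\subseteq W$ and $x\notin W$, which uses both compact-basedness and compactness of $X$.
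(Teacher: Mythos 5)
Your proof is correct and follows essentially the same route as the paper's: both directions rest on reducing patch-density to nonempty sets of the form $U\setminus V$ with $U,V\in\qcop(X)$, separating a closed point from a co-compact closed set by a finite union of basic compact opens for ($\Leftarrow$), and invoking \cref{TopGeneral}(ii) on the nonempty closed set $X\setminus(V\cup W)$ for ($\Rightarrow$). The only cosmetic difference is that you derive the directed form of subfitness from the symmetric one via the pair $(V,\,U\cup V)$, where the paper uses the directed form outright.
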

\begin{proof}
First suppose $\cp X$ is dense for the patch-topology of $X$.
Take $U,V\in\qcop(X)$ with $U\nsubseteq V$ and take a closed point $p\in U\setminus V$.
Since $\qcop(X)$ is a basis and $p$ is a closed point,
\(
(X\setminus U)\cup V\subseteq X\setminus \{p\}=\bigcup\{ W : p\notin W\in \qcop(X) \}.
\)
Because $(X\setminus U)\cup V$ is \qc\ and $\qcop(X)$ is closed under finite unions, there is $W\in \qcop(X)$ with $(X\setminus U)\cup V\subseteq W\not\ni p$.
Thus, $p\notin W\cup V$ and $U\cup W=X$, showing that $\qcop(X)$ is \jsub.

Conversely, assume that $\qcop(X)$ is \jsub.
Since $\qcop(X)$ is a basis, the sets of the form $U\setminus V$ with $U,V\in\qcop(X)$ form a basis for the patch-topology of $X$.
So we only need to show that each nonempty such set has a closed point.
As $\qcop(X)$ is \jsub, there is $W\in\qcop(X)$ with $U\cup W=X$ and $V\cup W\neq X$.
This implies $\varnothing\neq X\setminus (V\cup W)\subseteq U\setminus V$.
Now, $X\setminus (V\cup W)$ is a nonempty closed set of the compact space $X$, hence it contains a closed point of $X$ by
\cref{TopGeneral}\ref{TopGeneralCompact}.
\end{proof}

\goodbreak
\begin{corollary}
For every spectral space $X$, the lattice
$\qcop(X)$ is \msub\ if and only if each $U\in \qcop(X)$ is regular open.
\end{corollary}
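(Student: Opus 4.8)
The plan is to reduce \msub ness of $\qcop(X)$ to a single closure/interior condition and then recognise that condition as regular openness, using only that $\qcop(X)$ is a basis of $X$ (automatic since $X$ is spectral). Recall that for a bounded lattice \msub ness says: whenever $b\not\leq a$ there is $c$ with $a\wedge c=0$ and $b\wedge c\neq 0$. For the lattice $\qcop(X)$, with meet $\cap$ and bottom $\varnothing$, this becomes: for all $U,V\in\qcop(X)$ with $V\nsubseteq U$ there is $W\in\qcop(X)$ with $U\cap W=\varnothing$ and $V\cap W\neq\varnothing$.

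The first step is to prove the reformulation: $\qcop(X)$ is \msub\ if and only if, for all $U,V\in\qcop(X)$, $V\subseteq\mathrm{cl}_X(U)$ implies $V\subseteq U$. For the forward direction, if $V\subseteq\mathrm{cl}_X(U)$ but $V\nsubseteq U$, then \msub ness supplies $W\in\qcop(X)$ with $U\cap W=\varnothing$ and $V\cap W\neq\varnothing$; but $V\cap W$ is then a nonempty open set contained in $\mathrm{cl}_X(U)$, hence it meets $U$, contradicting $U\cap W=\varnothing$. For the converse, let $U,V\in\qcop(X)$ with $V\nsubseteq U$; the contrapositive of the hypothesis gives $V\nsubseteq\mathrm{cl}_X(U)$, so $V\setminus\mathrm{cl}_X(U)$ is a nonempty open set, and since $\qcop(X)$ is a basis it contains some nonempty $W\in\qcop(X)$; then $U\cap W=\varnothing$ while $V\cap W=W\neq\varnothing$.

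The second step is to identify this reformulated condition with ``every $U\in\qcop(X)$ is regular open.'' Each $U\in\qcop(X)$ is open, so $U\subseteq\mathrm{int}_X(\mathrm{cl}_X(U))$ always holds, and $U$ is regular open exactly when $\mathrm{int}_X(\mathrm{cl}_X(U))\subseteq U$. If every compact open is regular open and $V\subseteq\mathrm{cl}_X(U)$ with $U,V$ compact open, then, $V$ being open, $V=\mathrm{int}_X(V)\subseteq\mathrm{int}_X(\mathrm{cl}_X(U))=U$. Conversely, assuming the reformulated condition, fix $U\in\qcop(X)$ and $x\in\mathrm{int}_X(\mathrm{cl}_X(U))$; the basis property yields $V\in\qcop(X)$ with $x\in V\subseteq\mathrm{int}_X(\mathrm{cl}_X(U))\subseteq\mathrm{cl}_X(U)$, whence $V\subseteq U$ and $x\in U$, so $\mathrm{int}_X(\mathrm{cl}_X(U))\subseteq U$. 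Chaining the two equivalences gives the corollary.

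There is no serious obstacle here: the argument is elementary once the reformulation in the first step is isolated. The points that need care are purely bookkeeping --- keeping track of which element in the definition of \msub ness plays the role of $a$ and which of $b$, remembering that the bottom of $\qcop(X)$ is $\varnothing$, and the repeated (but routine) appeals to the basis property to shrink open sets to compact opens. It is worth remarking, in parallel with \cref{CharacterizeQCOPsubfit}, that this proof uses neither the compactness of $X$ nor the structure of its closed points, only that $\qcop(X)$ is a basis; thus it applies more generally to any compactly based $T_0$ space whose compact opens are closed under finite intersection.
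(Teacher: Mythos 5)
Your proof is correct, but it takes a genuinely different route from the paper. The paper deduces the corollary from \cref{CharacterizeQCOPsubfit} by passing to the inverse space $X_\inv$: since $\qcop(X_\inv)$ is the order-dual of $\qcop(X)$, \msub ness of $\qcop(X)$ is \jsub ness of $\qcop(X_\inv)$, which by \cref{CharacterizeQCOPsubfit} is patch-density of the closed points of $X_\inv$, i.e.\ of the minimal points of $X$; the paper then cites an external result (Corollary 4.4.20 of \cite{dickmann_spectral_2019}) identifying patch-density of the minimal points with all compact opens being regular open. You instead argue directly: \msub ness of $\qcop(X)$ is reformulated as ``$V\subseteq\mathrm{cl}_X(U)$ implies $V\subseteq U$ for $U,V\in\qcop(X)$,'' and this is matched against regular openness using only the basis property. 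Both steps of your argument check out (the nonempty open set $V\cap W\subseteq\mathrm{cl}_X(U)$ must meet $U$; the shrinking to basic compact opens is legitimate), and your reformulation of \msub ness in the form ``$b\nleq a$ implies there is $c$ with $a\wedge c=0\neq b\wedge c$'' is the standard equivalent the paper itself uses on the join side. What the paper's route buys is a uniform placement of the corollary inside the patch-density/\cref{CharacterizeQCOPsubfit} framework and the connection to minimal points; what your route buys is self-containedness (no inverse space, no external citation), avoidance of the choice-dependent step hidden in \cref{TopGeneral}\ref{TopGeneralCompact}, and, as you note, extra generality --- your argument needs neither compactness of $X$ nor sobriety, only that $\qcop(X)$ is a basis closed under finite intersections, whereas the inverse-space construction requires $X$ to be spectral.
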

\begin{proof}
For a spectral space $X$ the lattice $\qcop(X)$ is also a basis of closed sets of another spectral space, called the \textbf{inverse space} $X_\inv$ of $X$, see, e.g., \cite[sec.~1.4]{dickmann_spectral_2019}.
The space $X_\inv$ has the same patch-topology as $X$ and $\qcop(X_\inv)=\{A\subseteq X\st A\text{ closed in }X\text{ and patch-open}\}$ is the order-dual lattice of $\qcop(X)$. Furthermore, the closed points of $X_\inv$ are the minimal points of $X$ in the setup of Corollary 4.4.20 in \cite{dickmann_spectral_2019}. This corollary then says that the set of minimal points of $X$ is patch-dense iff
every $U\in \qcop(X)$ is regular open, i.e. $U$ is the interior for the topology of $X$ of the closure of $U$ in $X$. Consequently, \cref{CharacterizeQCOPsubfit} applied to $X_\inv$ yields the result.\looseness=-1
\end{proof}

\smallskip\noindent
\textbf{Join-subfitness of compact open sets of prime spectra of rings.}
A \textbf{Yosida frame} is an algebraic frame whose semilattice of compact elements is ideally subfit, see \cite[sec. 3.5]{delzell_conjunctive_2021}.
Here we are specifically interested in compact and coherent frames, because these are precisely those that are isomorphic to the topology
of some spectral space.  If $X$ is a spectral space then the lattice $\qcop(X)$ is \jsub\ if and only if $\CO(X)$ is a Yosida frame
(see \cite[Prop.~4.2]{martinez_yosida_2006} and \cite[Prop.~3.20]{delzell_conjunctive_2021} for a more general statement).

In \cite[sec. 5 and 6]{martinez_yosida_2006} one can find a host of examples of $\ell$-groups whose frame of convex $\ell$-subgroups is a Yosida frame and examples of B\'ezout domains $R$
for which $\qcop(\Spec(R))$ is join-subfit. We add two more examples.

Firstly, let $C(T)$ be the ring of continuous real valued functions on a completely regular space $T$.
Then the lattice $\qcop(X)$ with $X=\Spec C(T)$ is \jsub\ if and only if $T$ is a P-space, meaning that $C(T)$ is von Neumann regular.
The reason is that $\cp X$ is the set of maximal ideals of $C(T)$ and the patch-closure $Z$ of the maximal ideals is the set of prime z-ideals of $C(T)$
\footnote{The frame $\CO(Z)$ is the archetypical example of a Yosida frame. See the introduction of \cite{martinez_yosida_2006}
and also \cite[13.5.16]{dickmann_spectral_2019}.}; hence the only way $\cp X$ can be patch-dense is when
$X=Z$. But this is only possible for P-spaces, see \cite[4J]{gillman_rings_1960}.

The situation changes when we restrict to semi-algebraic functions: Let $R$ be the ring of continuous semi-algebraic functions
$f:\Rn^n\lra \Rn$, i.e.~the graph of $f$ is first-order definable in the field $\Rn$. We show that $\qcop(\Spec(R))$ is \jsub\ by verifying
\cref{CharacterizeQCOPsubfit}, i.e., we show that every nonempty patch-open subset $C$ of $\Spec(R)$ contains a maximal ideal of $R$.
Using \cite[12.1.10(iv)]{dickmann_spectral_2019}, we may assume that $C$ is of the form $\{\Dp\in\Spec(R)\st f\in \Dp,\ g\notin \Dp\}$ for some $f,g\in R$, because in $R$,
the radical of any finitely generated ideal $(f_1,\ldots,f_k)$ is equal to the radical ideal generated by $f_1^2+\ldots+f_k^2$ (the function $\frac{f_i^3}{f_1^2+\ldots+f_k^2}$ has a continuous extension through $0$).
Since $C$ is nonempty, no power of $g$ is divisible by $f$ in $R$. By the \Lpolish ojasiewicz inequality of Real Algebraic Geometry, see \cite[Thm.~2.6.6]{bochnak_real_algebraic_1998}, we cannot have
$f^{-1}(0)\subseteq g^{-1}(0)$, hence there is some $a\in\Rn^n$ with $f(a)=0$ and $g(a)\neq 0$. Let $e:R\lra \Rn$ be the evaluation map at $a$.
Then $e$ is a ring homomorphism with image $\Rn$ and so its kernel $\Dm$ is a maximal ideal of $R$.
By the choice of $a$ we get $\Dm\in C$.

\goodbreak
\smallskip\noindent
\textbf{Topological perspective on \cref{SubfitIsJoinStableInDLat}}\qquad Let $X$ be a spectral space.
Using  \cref{CharacterizeQCOPsubfit} and Stone duality for distributive lattices, \cref{SubfitIsJoinStableInDLat}
is equivalent to saying that for all $U,V\in\qcop(X)$, if $\cp U$ is patch-dense in $U$ and $\cp V$ is patch-dense in $V$, then $\cp{U\cup V}$ is patch-dense in $U\cup V$.

We give a direct proof of this fact,
thus providing a topological perspective on \cref{SubfitIsJoinStableInDLat}.
The key (and only) property used in this proof that is not available for compactly based spaces in general is the following
consequence of $\qcop(X)$ being stable under finite intersections:
\begin{enumerate}
\item[$(*)$] If $O_1,O_2\in\qcop(X)$ with $O_1\subseteq O_2$, then for every patch open subset $C$ of the space $O_2$,
the set $C\cap O_1$ is empty or has nonempty interior for the patch topology of $O_1$.\footnote{In fact, we even know that the patch topology of $O_1$ is the subspace topology induced on $O_1$ by the patch topology of $O_2$. The weaker assumption $(*)$ is used here because the proof also goes through for compact, compactly based spaces only satisfying $(*)$.}
\end{enumerate}

\noindent
We need to show that
$\cp{U\cup V}$ is patch-dense in $U\cup V$.
Take $C\subseteq U\cup V$ nonempty and patch-open. By symmetry we may assume that $C\cap V\neq \varnothing$.
By $(*)$ and the assumption on $V$, there is some $v\in \cp V\cap C$.
If $v\notin U$, then since $U$ is open, $v$ is a closed point of $U\cup V$ and we are done.
Hence, we may assume that $v\in U$.

From $v\in \cp V$ we know $\overline{\{v\}}\cap V=\{v\}$ and so $\cp V\setminus U \subseteq X\setminus \overline{\{v\}}$.
Since $\qcop(X)$ is a basis and $\cp V\setminus U$ is compact (use \cref{TopGeneral}\ref{TopGeneralCompact}), there is some $W\in \qcop(X)$
with $\cp V\setminus U \subseteq W\subseteq X\setminus \overline{\{v\}}$. Consequently, $v\in U\cap C\setminus W$, showing that
$U\cap C\setminus W$ is nonempty. By $(*)$ and the assumption on $U$, there is some $u\in \cp U\cap C\setminus W$.
If $u\notin V$, then since $V$ is open, $u\in \cp{U\cup V}$ and we are done.
Hence, we may assume that $u\in V$. If $u\in \cp V$, then we are done. Otherwise
there is $w\in \cp V$ such that $w\in \overline{\{u\}}$ and $w\neq u$.
But then $w\notin U$ because $u\in \cp U$. Thus, $w\in \cp V\setminus U\subseteq W$
and so $u\in W$, a contradiction.
\qed

\goodbreak
\smallskip\noindent
\hypertarget{CompBasedCounterExample}{\textbf{The counterexample of \cref{DSLatCounterExample} in topological form}}\qquad
We present the compactly based space
$X$ for which $\qcop(X)$ is the order-dual of the distributive meet-semilattice from  \cref{DSLatCounterExample} in the Gr\"atzer-Stone representation.
Hence the \jsub\ elements of $\qcop(X)$ do not form an ideal.
We work in an auxiliary spectral space $Y$, shown below.

\medskip
\tikzstyle{squiggly} = [line join=round,decorate, decoration={
    zigzag,
    segment length=4,
    amplitude=.9,post=lineto,
    post length=2pt
}]
\begin{center}
\begin{tikzpicture}[scale=1]
    \coordinate (omega) at (5,0);
    \foreach \r in {0,...,3}
      {
       \coordinate (p) at (\r,0);
       \coordinate (pnext) at (\r+1,0);
       \fill[black] (p) circle (2pt) node[below=0.5ex] {$p_\r$};
       \ifthenelse{\r = 3}{}{
       }
      }
  \draw[-,dotted,thick,shorten <=1.3cm,shorten >=1.3cm] (p) -- (omega);
  \fill[black] (omega) circle (2pt) node[below=0.5ex] {$\omega$};
  \coordinate (x) at (4,2);
  \coordinate (z) at (5.5,1);
  \coordinate (y) at (6,2);
  \coordinate (x) at (7,1);
  \coordinate (z) at (6,-0.5);
  \coordinate (y) at (7,-1);
  \draw[->,squiggly,shorten <=0.05cm,shorten >=0.15cm] (omega) -- (x);
  \fill[black] (x) circle (2pt) node[right=0.5ex] {$x$};
  \draw[->,squiggly,shorten <=0.05cm,shorten >=0.15cm] (omega) -- (z);
  \fill[black] (z) circle (2pt) node[below=0.5ex] {$z$};
  \draw[->,squiggly,shorten <=0.05cm,shorten >=0.15cm] (z) -- (y);
  \fill[black] (y) circle (2pt) node[right=0.5ex] {$y$};
\end{tikzpicture}
\end{center}

\medskip\noindent
Here $\omega$ is the only non-isolated point of the patch topology of $Y$ and the immediate specializations are those indicated by a squiggly
arrow\footnote{Hence $u\rightsquigarrow v$ implies that $v\in \overline{\{u\}}$.} 
(hence all points of $Y$ except $\omega$ and $z$ are closed points).
Let $X$ be the subspace $Y\setminus \{\omega\}$ of $Y$ and let $P=\{p_0,p_1,p_2,\ldots\}$.
One checks that for $U\in\qcop(Y)$ the set $U\cap X$ is \qc\ provided $U\neq P_0\cup\{\omega\}$ for all cofinite subsets $P_0$ of $P$.
It follows that $X$ is compactly based. However $\qcop(X)$ is not \jsub\ by \cref{CharacterizeQCOPsubfit} since $\cp X$ is not patch-dense,
witnessed by the identity $\{z\}=(P\cup \{z\})\setminus (P\cup \{x\})$.

Finally, $X=V\cup W$ with $V=P\cup \{x\}\in \qcop(X)$ and $W=P\cup \{y,z\}\in \qcop(X)$, but $\qcop(V)$ and $\qcop(W)$  are \jsub\ as one checks
using \cref{CharacterizeQCOPsubfit} and the characterization of compactness above.

\bibliographystyle{alpha}
\bibliography{\jobname}

\end{document}